\documentclass{article}
\usepackage{amssymb,amsmath,amsthm}
\usepackage[T2A]{fontenc}
\usepackage[utf8]{inputenc}
\usepackage[english]{babel}
\usepackage{color}
\usepackage{hyperref} 
\usepackage[normalem]{ulem}
\textwidth     149mm   
\textheight    220mm   
\topmargin      -10mm  
\oddsidemargin  15mm    
\evensidemargin 15mm    
\linespread{1.05}

\parindent 10mm

\def\dom{\mathop{\rm dom}}
\def\ds{\displaystyle}

\def\epsilon{\varepsilon}
\def\phi{\varphi}
\newcommand{\cl}{{\rm cl}}

\def\gph{\mathop{\rm gph}}

\renewcommand{\emptyset}{\varnothing}

\def\dom{\mathop{\rm dom}}

\def\gph{\mathop{\rm gph}}
\def\dom{\mathop{\rm dom}}

\def\ball{{I\kern -.35em B}}
\def\epsilon{\varepsilon}
\def\phi{\varphi}

\def\reals{{I\kern-.35em R}} \def\Reals{\overline{I\kern-.35em R}}

\def\qed{\hfill{$\vcenter{\hrule heigh$T_1$pt \hbox{\vrule width1pt height5pt
    \kern5pt \vrule width1pt} \hrule heigh$T_1$pt}$} \medskip}

\def\text#1{\,\;\hbox{#1}\;\,}
\newcommand{\N}{\mathbb{N}}
\newcommand{\R}{\mathbb{R}}

\newcommand{\beq}{\begin{equation}}
\newcommand{\eeq}{\end{equation}}
\newcommand{\be}{\begin{equation}}
\newcommand{\ee}{\end{equation}}

\renewenvironment{proof}
         {\begin{trivlist}\item[
         {\bf Proof.}]}{{\hfill $\square$} \end{trivlist}}

\newtheorem{theo}{Theorem}[section]
\newtheorem{defi}[theo]{Definition}
\newtheorem{rem}[theo]{Remark}
\newtheorem{lemma}[theo]{Lemma}
\newtheorem{prop}[theo]{Proposition}
\newtheorem{examp}[theo]{Example}
\newtheorem{cor}[theo]{Corollary}
\raggedbottom
\sloppy
\setcounter{page}{1}

\title{Fixed Point Theorems for Set-Valued Maps with Contractive Orbits}
\author{Detelina Kamburova\thanks{Faculty of Mathematics and Informatics, Sofia University,   5, James Bourchier Blvd, 1164 Sofia, Bulgaria, e-mail: detelinak@fmi.uni-sofia.bg;} \thanks{ Institute of Mathematics and Informatics, Bulgarian Academy of Sciences, Acad. G. Bonchev Str., Block 8,
1113 Sofia, Bulgaria;
e-mail: detelinak@math.bas.bg.}}

\date{2026}

\begin{document}

\maketitle

\begin{abstract}
This paper studies set-valued maps that are lower semicontinuous when restricted to orbits in Hausdorff spaces. We introduce two notions of contractive orbits for such maps. The first defines contraction in terms of the topology of the underlying space, while the second is based on a generalized distance function. Fixed point theorems are established for both classes of mappings. We also show that the Hausdorff assumption is essential, as the results generally fail without it. As an application, we generalize Cantor's intersection theorem for sequences of closed nested sets with diameters converging to zero. We derive fixed point theorems for set-valued maps in Hausdorff locally convex vector spaces. The results in premetric spaces we apply to establish fixed point theorems for set-valued maps regular with respect to a generalized distance function.

\noindent
\textbf{Keywords:} Fixed point theorem, set-valued map, contractive orbit, first countable space, Hausdorff space, premetric space.

\noindent
\textbf{2020 Mathematics Subject Classification: 54H25, 54F65, 54E25, 49J27} 
\end{abstract}

\section{Introduction}

The Banach contraction principle states that in a complete metric space every contraction mapping has a unique fixed point. Moreover, each sequence of successive iterates converges to this fixed point. The existence of an accumulation point of a given sequence is typically ensured by the completeness of the underlying space.  An extensive study of fixed point results that imply completeness in metric and generalized metric spaces can be found in \cite{Cob}.

A definition of a contraction in first countable Hausdorff spaces was given by Kupka in \cite{Kupka}. Kupka continues his work in \cite{Kupka_Sl} for set-valued maps (feebly topologically contractive maps, f-t-contractive) with closed graphs that "shrink" to a unique fixed point, i.e., a strict fixed point. In \cite{MR}, it is proved that if one does not require uniqueness of the fixed point of a single-valued (continuous or closed) mapping, it suffices to consider just two consecutive terms of a given orbit being arbitrarily close to ensure the existence of a fixed point. Other results concerning fixed point theorems for topological contractions can be found in  \cite{MR_B} (in compact $T_1$ spaces).

The LOEV (Long Orbit or Empty Value) principle has been formulated and proved in \cite{Ivanov-Zlateva} in complete metric spaces $(X,d)$. It concerns a set-valued map $S: X\rightrightarrows X$ that "shrinks" to a point $\bar{x}$ outside of the domain of $S$, i.e., $S(\bar{x})=\emptyset$. Applications of the LOEV principle can be found for example in \cite{iv_zla_nme}, \cite{plr}. 
 Its equivalence to $\Sigma$-semicompleteness of generalized metric spaces as well as to other fundamental in variational analysis and optimization results such as Ekeland's variational principle, Caristi's fixed point theorem, Takahashi's minimization principle and Oettli-Thera theorem in generalized metric spaces has been thoroughly studied in \cite{IKZ} and \cite{KZ}.

Another key property of the space used in the proof of the LOEV principle is that the underlying space is Hausdorff. The aim of this paper is to generalize the LOEV statement in Hausdorff spaces. In our main results (Theorem \ref{Hausdorff_Empty} Theorem\ref{strong_point} ) we do not impose any closed-graph assumptions on the set-valued maps we consider, nor compactness of the underlying space. Instead, we assume the maps we work with are lower semicontinuous on a given orbit (see Section 2, Definition \ref{lower_on_orbit}) or satisfy the property $(\bar{\star})$ (a concept inherited from LOEV, see Section 5, Definition \ref{def:bar_star_2}). 

The paper is organized as follows. Section 2 considers topological spaces with no generalized metric defined on them. We define f-t-contractive orbits, establish fixed point theorems for lower semicontinuous maps, and generalize Cantor's intersection theorem for closed nested sets. Adapting the techniques of \cite{MR}, Section 3 establishes sufficient conditions for the existence of strict fixed points in locally convex vector spaces. Section 4 restricts the framework to first countable Hausdorff spaces. We define strong accumulation points for contractive orbits and prove that the Hausdorff assumption is strictly essential and cannot be dropped. Section 5 transitions to premetric spaces, that is, spaces with a generalized distance function that lacks the symmetry property and the triangle inequality. Introduced in the early 20th century, premetric spaces remain an active area of research, see \cite{P-Ch}, \cite{Niem}, \cite{Ned},  \cite{Mas}. We define a $p$-contractive orbit and compare it against the f-t-contractive notion.  If the premetric function satisfies some additional properties, f-t-contractive orbits are $p$-contractive in Hausdorff premetric spaces. The converse is not generally true. Section 6 provides applications to fixed point theorems for set-valued maps that are regular with respect to a lower semicontinuous premetric.





\section{Topological contractions in Hausdorff spaces}

Given a set-valued map $S: X \rightrightarrows X$, a succession of points in $X$ satisfying $x_{i+1}\in S(x_i)$ for $i=1,2,\dots $ is called an $S$-orbit. We say that an $S$-orbit is finite and ends at $x$ if there exists $n \in \N$ such that $x_n=x$ and $S(x)= \emptyset$. If $S(x_i) \neq \emptyset$ for all $i=1,2,\dots $ then the $S$-orbit is infinite. In particular, we consider the stationary orbit $x,x,x, \dots$ as an infinite orbit. Denote by $\dom S := \{x\in X:\ S(x) \neq \emptyset\}$. The graph of $S:X \rightrightarrows Y$, where $X,Y$ are topological spaces, is the set $\gph S$:=$\{(x,y) \in (X \times Y): y \in S(x)\}$.   


\begin{defi}
   \label{lower_on_orbit} Let $(X, \tau)$ be a topological space and $S:X\rightrightarrows X$ be a set-valued map. We say that $S$ is lower semicontinuous on an $S$-orbit $M$ at $x \in \cl M \cap \dom S$  if for every open set $V \subseteq X$ satisfying $S(x) \cap V \neq \emptyset$, there exists an open neighborhood $U \subseteq X$ of $x$ such that:
   $$S(x') \cap V \neq \emptyset \quad \forall x' \in U \cap M.$$
We say that a set-valued map is lower semicontinuous on an $S$-orbit if it is lower semicontinuous with respect to $M$ at each $x \in \cl M \cap \dom S$. 
  \end{defi}

\begin{rem} \label{rem} Note that in a Hausdorff space if

(1) the $S$-orbit is a convergent sequence, it is enough to check the lower semicontinuiuty at the limit point;

(2) an $S$-orbit $M$ is finite then $S$ is trivially lower semicontinuous on $M$.

\end{rem}

\begin{defi} \label{tau-contr} Let $(X, \tau)$ be an arbitrary topological space. Let $S:X \rightrightarrows X$ be a set-valued map. An infinite $S$-orbit $\{x_i\}_{i = 1}^{\infty}$ is said to be feebly topologically contractive (f-t-contractive) if for every open cover $\gamma$ of $X$ there exist $U \in \gamma$ and $i_0 \in \N$ such that $x_{i_0} \in U$ and $S(x_{i_0}) \subseteq U$.
\end{defi}

\begin{lemma} \label{f-t-orbit}
Let $(X, \tau)$ be a Hausdorff space. Let $S: X \rightrightarrows X$ be a set-valued map. Suppose $M:=\{x_i\}_{i=1}^{\infty}$ is an infinite f-t-contractive $S$-orbit. Then there exists a point $\bar{x} \in X$ such that for each neighbourhood $N_{\bar{x}}$ of $\bar{x}$ there exists $i_0 \in \N$ such that $x_{i_0} \in N_{\bar{x}}$ and $S(x_{i_0}) \subseteq N_{\bar{x}}$.
\end{lemma}
\begin{proof}
As in the proof of Theorem 4.5 in \cite{MR}, if  for each $x \in X$ there exists a neighborhood $N_x$ of
$x$ such that for each $i \in \N$ either $S(x_i) \nsubseteq N_x$ or $x_i \notin N_x$. Then the $S$-orbit does not satisfy the definition of
feeble topological contraction for the covering $\{N_x : x \in X\}$.
\end{proof}

\begin{theo} \label{Hausdorff_Empty}
Let $(X, \tau)$ be a Hausdorff topological space. Let $S: X \rightrightarrows X$ be a set-valued map. Suppose $M:=\{x_i\}_{i=1}^{\infty}$ is an infinite f-t-contractive $S$-orbit and $S$ is lower semicontinuous on $M$. Then there exists a point $\bar{x} \in X$ such that $S(\bar{x}) \subseteq \{\bar{x}\}$.
\end{theo}
\begin{proof} 
Let $\bar{x}$ be the point from Lemma \ref{f-t-orbit} for the set-valued map $S$ and the orbit $M$, therefore, for each neighbourhood of $\bar{x}$, $N_{\bar{x}}$ it holds that $N_{\bar{x}} \cap M \neq \emptyset$. We will prove $S(\bar{x}) \subseteq \{\bar{x}\}$. Let $y \in S(\bar{x})$, $y \neq \bar{x}$. Since $X$ is Haudorff there exist open neighbourhoods of  $V\subset X, U_1 \subset X$, of $y$ and $\bar{x}$  respectively such that $U_1 \cap V = \emptyset$. As $S(\bar{x}) \cap V \neq \emptyset$, from lower semicontinuity property of $S$ on $M$ it follows there exists an open neighbourhood of $\bar{x}$, $U_2 \subseteq X$, such that $S(x) \cap V \neq \emptyset$ for all $x \in U_2 \cap M \neq \emptyset$. Denote by $U=U_1 \cap U_2$, an open neighbourhood of $\bar{x}$. Note that $U \cap V =\emptyset$. Let $i_0 \in \N$ be such that $S(x_{i_0}) \subseteq U$ and $x_{i_0} \in U \cap M$. Since $U \subseteq U_2$, it follows  $x_{i_0} \in U_2 \cap M$. So, $S(x_{i_0}) \cap (U \cap V) \neq \emptyset,$ a contradiction.
\end{proof}

\begin{rem}\label{rem2}
If the $S$-orbit is a convergent sequence in a Hausdorff space with a limit point $\bar{x}$ then $S(\bar{x}) \subseteq \{\bar{x}\}$.
\end{rem}

What follows is Cantor’s intersection theorem for a countable family of decreasing, nested, closed subsets in Hausdorff spaces, framed in terms of open coverings. We also note that extensions to admissible spaces (for arbitrary families) and locally convex Hausdorff spaces (for countable families), alongside their equivalence to space completeness and sequential completeness respectively, are established in \cite{Souza_Alves} and \cite{Baisnab_Saha}.

\begin{cor} \label{basic_coro_Cantor_topo}
Let $(X, \tau)$ be a Hausdroff space. Let $\{C_i\}_{i=1}^{\infty}$ be a sequence of closed nested nonempty subsets of $X$. If for every open cover $\gamma$ of $X$ there exist $U \in \gamma$ and $j_0 \in \N$ such that $C_{j_0} \subseteq U$ then there exists $\bar{x} \in X$ such that $\cap_{i=1}^{\infty}C_i=\{\bar{x}\}$.
\end{cor}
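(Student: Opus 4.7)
The plan is to reduce the statement to Lemma~\ref{conv_o_orbital} by encoding the nested sequence as the orbit of a suitable set-valued map. Concretely, I would pick a point $x_i \in C_i$ for each $i \in \N$ (recall each $C_i$ is nonempty) and define $S : X \rightrightarrows X$ by setting $S(x) := C_{i+1}$ whenever $x = x_i$ (using the smallest such index if the chosen points happen to repeat) and $S(x) := \emptyset$ otherwise. Since $x_{i+1} \in C_{i+1} = S(x_i)$ and $S(x_i) \neq \emptyset$ for every $i$, the sequence $\{x_i\}_{i=1}^\infty$ is an infinite $S$-orbit.

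Verifying $\tau$-contractiveness is immediate from the hypothesis: for an arbitrary open cover $\gamma$ of $X$, select $U \in \gamma$ and $j_0 \in \N$ with $C_{j_0} \subseteq U$; then with $i_0 := j_0$ one has $x_{i_0} \in C_{j_0} \subseteq U$, and by the nesting, $S(x_{i_0}) = C_{j_0+1} \subseteq C_{j_0} \subseteq U$. Lemma~\ref{conv_o_orbital} therefore yields a strong accumulation point $\bar{x}$ and a subsequence $x_{i_k} \to \bar{x}$ such that for every neighborhood $U_{\bar{x}}$ of $\bar{x}$ the inclusion $\{x_{i_k}\} \cup C_{i_k+1} = \{x_{i_k}\} \cup S(x_{i_k}) \subseteq U_{\bar{x}}$ holds for all but finitely many $k$.

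From this I would conclude that $\bar{x} \in C_m$ for every $m$: fix $m$, pick $k$ so large that $i_k + 1 \geq m$, and observe that the nonempty set $C_{i_k+1} \subseteq C_m$ sits inside any prescribed neighborhood of $\bar{x}$; hence every such neighborhood meets $C_m$, and closedness gives $\bar{x} \in C_m$. Thus $\bar{x} \in \bigcap_{i=1}^\infty C_i$. Uniqueness then follows from the Hausdorff property: if some $\bar{y} \neq \bar{x}$ also belonged to the intersection, disjoint open neighborhoods $U_{\bar{x}} \ni \bar{x}$ and $V_{\bar{y}} \ni \bar{y}$ would yield $\bar{y} \in C_{i_k+1} \subseteq U_{\bar{x}}$ for $k$ large, contradicting $\bar{y} \in V_{\bar{y}}$ being disjoint from $U_{\bar{x}}$.

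The argument is almost a direct translation and there is no real obstacle. The single design choice that matters is sending $x_i$ to the entire set $C_{i+1}$, not merely to the point $x_{i+1}$; this is precisely what converts the covering condition on $\{C_i\}$ into the $\tau$-contractiveness of the orbit. Note also that property $(\bar{\star})$ and the stronger Theorem~\ref{Hausdorff_Empty} are not required here: Lemma~\ref{conv_o_orbital} alone suffices, with Hausdorffness invoked only at the uniqueness step.
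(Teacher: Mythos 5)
Your overall strategy is the paper's own: encode the nested sequence as an $S$-orbit, derive $\tau$-contractiveness from the covering hypothesis, extract a strong accumulation point via Lemma~\ref{conv_o_orbital}, use closedness of the $C_i$ for membership and Hausdorffness for uniqueness. But there is a genuine gap in your definition of $S$, and it sits exactly at the step you call immediate. Your ``smallest index'' tie-break makes the verification of $\tau$-contractiveness wrong: if $x_{j_0}$ coincides with an earlier point $x_m$, $m<j_0$, then by your convention $S(x_{j_0})=C_{m+1}$, not $C_{j_0+1}$, and the nesting runs the wrong way ($C_{m+1}\supseteq C_{j_0}$), so $S(x_{j_0})\subseteq U$ may fail. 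This is not a formal quibble. Take $X=\R$ with the usual topology, $C_1=[0,1]$ and $C_i=[0,2^{1-i}]$ for $i\ge 2$ (the corollary's hypothesis holds: any cover element containing $0$ absorbs $C_i$ for large $i$), and the permitted choices $x_1=0$, $x_2=\tfrac12$, $x_i=0$ for $i\ge 3$. Then $S(0)=C_2=[0,\tfrac12]$ and $S(\tfrac12)=C_3=[0,\tfrac14]$, and for the open cover $\bigl\{(-1,\tfrac12),\,(\tfrac14,2),\,\R\setminus[-\tfrac12,\tfrac32]\bigr\}$ no element $U$ satisfies $x_i\in U$ and $S(x_i)\subseteq U$ for any $i$: the only element containing $0$ misses $\tfrac12\in S(0)$, and the only element containing $\tfrac12$ misses $0\in S(\tfrac12)$. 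So with unlucky but allowed choices your orbit is simply not $\tau$-contractive, and the reduction collapses. Switching to the largest index does not help: that would repair the inclusion but destroy the orbit property $x_{i+1}\in S(x_i)$ (and the largest index need not even exist when a point repeats infinitely often).

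The clean repair is the paper's actual construction: define $S$ intrinsically by depth of membership rather than by enumeration, namely $S(x):=C_1$ for $x\notin C_1$ and $S(x):=\bigcap_{n=1}^{k}C_n$ with $k:=\sup\{m:x\in C_m\}$ otherwise, and build the orbit so that it strictly descends, $x_{i+1}\in\bigcap_{n=1}^{k_i+1}C_n$; then $k_i\to\infty$ and your contractiveness argument goes through verbatim. Once $S$ is repaired, the remainder of your proof is correct, and your endgame is in fact leaner than the paper's: you obtain uniqueness by direct Hausdorff separation at the strong accumulation point, so property $(\bar{\star})$ and Theorem~\ref{Hausdorff_Empty} are indeed not needed, whereas the paper notes that $(\bar{\star})$ holds trivially (every $y\in S(\bar{x})=\bigcap_i C_i$ lies in every $S(x_i)$) and invokes Theorem~\ref{Hausdorff_Empty} to conclude $S(\bar{x})=\{\bar{x}\}=\bigcap_{i=1}^{\infty}C_i$. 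Both endgames are valid; the sole failure point in your proposal is the enumeration-based definition of $S$.
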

\begin{proof}
Define the set-valued map $S$ as
\[ S(x):= \left \{\begin{array}{rl} C_1, & \text{if } x \notin C_1;  \\
 C_{k}, & \text{if there exists} k \in \N: x \in C_{k} \setminus C_{k+1};\\
\displaystyle{\bigcap_{n=1}^{\infty}C_n,} & \text{if } \displaystyle{x \in \bigcap_{n=1}^{\infty}C_n}.\end{array}\right.\] Construct an orbit $M:=\{x_i\}_{i=1}^{\infty}$ as follows
\[\text{choose} x_1 \in C_1, \text{if } x_1 \in \bigcap_{n=1}^{\infty}C_n, \text{then} x_i=x_1 \text{for all} i \in \N, \text{otherwise}\] \[\text{set} k_1:=\sup\{m:x_1 \in C_m\} < \infty,\] 
\[\text{choose} x_2 \in \bigcap_{n=1}^{k_1+1} C_n, \text{if } x_2 \in \bigcap_{n=1}^{\infty}C_n, \text{then} x_i=x_2 \text{for all} i \geq 2, \text{otherwise}\] \[\text{for all} i \in \N \text{set} k_{i+1}:= \sup\{m:x_{i+1} \in C_m\} < \infty, \] 
\[\text{choose} x_{i+2} \in \bigcap_{n=1}^{k_{i+1}+1} C_n, \dots\]

Observe that $x \in S(x)$ for all $x \in M$ and if $x \in C_n$ for some $n \in \N$ then $S(x) \subseteq C_n$. Moreover, $k_i \rightarrow \infty$ as $i\rightarrow \infty$. For every open cover $\gamma$ of $X$ we can always find $j_0 \in \N$ and $U \in \gamma$ such that $C_{j_0} \subseteq U$. Choose $k_{i} \geq j_0$. Therefore $x_{i+1} \in C_{k_i+1} \subseteq C_{j_0}$ and $S(x_{i+1}) \subseteq C_{j_0}$. Therefore, $M$ is a f-t-contractive $S$-orbit. From Lemma \ref{f-t-orbit} there exists a point $\bar{x}$ such that for each neighbourhood $N_{\bar{x}}$ of $\bar{x}$ there exists $i_0 \in \N$ such that $x_{i_0} \in N_{\bar{x}}$ and $S(x_{i_0}) \subseteq N_{\bar{x}}$. Since the sets are nested all but finitely many members of the sequence $M$ are in $S(x_{i_0})$, that is, $M$ is a convergent sequence and $\bar{x}$ is the limit point of $M$.

First, we will prove $\bar{x} \in C_i$ for all $i \in \N$.  Fix $k \in \N$. Once again, all but finitely many members of the sequence $M$ are in $C_k$. Since $C_k$ is a closed set then $\bar{x} \in C_k$. We chose $k$ arbitrarily then $\bar{x} \in C_i$ for all $i \in \N$.

Finally, we demonstrate that $S(\bar{x})$ is a singleton. Lower semicontinuity of $S$ on $M$ is trivial. The assumptions of Theorem \ref{Hausdorff_Empty} are satisfied. Moreover, $S(x) \neq \emptyset$ for all $x \in X$ and therefore \[S(\bar{x})=\{\bar{x}\}=\displaystyle{\bigcap_{n=1}^{\infty}C_n}.\] 
\end{proof}

Assuming the lower semicontinuity of $S$ at each point in $\dom S$ allows us to relax the contraction condition, removing the requirement for it to be restricted to an orbit.
\begin{theo} \label{f-t-contractive_global}
Let $(X, \tau)$ be a Hausdorff space. Let $S: X \rightrightarrows X$ be a set-valued map and suppose 

(1) $S$ is lower semicontinuous at each point in $\dom S$;

(2) if for every open cover $\gamma$ of $X$ there exists $x \in X$ there exists and $U \in \gamma$ such that $S(x) \subseteq U$ and $x \in U$. 

Then there exists a point $\bar{x} \in X$ such that $S(\bar{x}) \subseteq \{\bar{x}\}$.
\end{theo}
\begin{proof} Follows the proof of Lemma \ref{f-t-orbit} and Theorem \ref{Hausdorff_Empty} and is omitted.
\end{proof}

We now proceed with another fixed point theorem derived from assumptions imposed on a given orbit. The proof follows the same steps as the proof of Theorem 2 \cite{Kupka_Sl} concerning the existence of a fixed point. 
\begin{theo} \label{generalization} Let $(X,\tau)$ be an arbitrary topological space. Let $S: X \rightrightarrows X$ be a set-valued map with a closed graph. Let $\dom S=X$ and  $M:=\{x_i\}_i$ be an $S$-orbit. If for every open cover $\gamma$ of $X$ there exists $i_0 \in N$ and $U \in \gamma$ such that $S(x_{i_0}) \subseteq U$ and $x_{i_0+2} \in U$ holds then $S$ has a fixed point.
\end{theo}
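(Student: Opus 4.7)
The plan is to prove this by contradiction, exploiting the closed graph of $S$ directly rather than first extracting a convergent subnet in the style of Lemma~\ref{conv_o_orbital}. Suppose for contradiction that $S$ has no fixed point, so that $(x,x)\notin\Gr S$ for every $x\in X$. Since $\Gr S$ is closed in $X\times X$ with the product topology, its complement is open, and for each $x\in X$ I would pick basic open neighborhoods $V_x,W_x$ of $x$ with $(V_x\times W_x)\cap\Gr S=\emptyset$, and set $U_x:=V_x\cap W_x$. This gives an open neighborhood of $x$ with the key property that whenever $a,b\in U_x$, one has $b\notin S(a)$.

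Next, I would assemble these sets into the open cover $\gamma:=\{U_x:x\in X\}$ and invoke the hypothesis to obtain $i_0\in\N$ and some $U=U_{x^*}\in\gamma$ satisfying $S(x_{i_0})\subseteq U$ and $x_{i_0+2}\in U$. The orbit property $x_{i_0+1}\in S(x_{i_0})$ then forces $x_{i_0+1}\in U$ as well, so the two consecutive orbit points $x_{i_0+1}$ and $x_{i_0+2}$ both lie in $U_{x^*}$. By construction of $U_{x^*}$ this gives $x_{i_0+2}\notin S(x_{i_0+1})$, contradicting $x_{i_0+2}\in S(x_{i_0+1})$ along the orbit $M$.

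The entire argument hinges on one translation step: turning ``closed graph plus no fixed point'' into the neighborhood condition ``$a,b\in U_x\Rightarrow b\notin S(a)$'', which is a routine unpacking of the product topology. The reason the hypothesis on $\gamma$ suffices here, beyond the bare $\tau$-contractive condition of Definition~\ref{tau-contr}, is that it supplies two \emph{consecutive} terms of the orbit inside a single cover element, which is precisely the configuration forbidden by the closed graph assumption near the diagonal. I do not anticipate a serious obstacle: no first countability, no case distinction, and no separation axioms are needed beyond what is already encoded in the closedness of $\Gr S$.
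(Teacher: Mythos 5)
Your proof is correct and is essentially the paper's own argument (which follows Kupka): both suppose there is no fixed point, treat $X\times X\setminus \Gr S$ as an open neighborhood of the diagonal, cover $X$ by open sets $U$ with $(U\times U)\cap \Gr S=\emptyset$, and obtain the contradiction from the consecutive orbit points $x_{i_0+1}\in S(x_{i_0})\subseteq U$ and $x_{i_0+2}\in S(x_{i_0+1})\cap U$. Your explicit construction of the sets $U_x=V_x\cap W_x$ from basic product neighborhoods simply spells out why such a cover exists, a step the paper leaves implicit in its definition of $\gamma$ as all open $V$ with $V\times V$ inside the complement of the graph.
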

\begin{proof} As it was pointed out, we follow the proof of Theorem 2 in \cite{Kupka_Sl}. Consider the space $X \times X$ with the product topology and the set $O = X \times X \setminus \gph S$. Suppose there is no fixed point of $S$ then $O$ is open neighborhood of the diagonal $(x,x) \in X \times X$. 

Consider the following open cover of $X$ $\gamma:=\{V \in X, V - \text{open in} X, V \times V \in O\}$. Then there exists $i_0 \in \N$ and $U \in \gamma$ such that $S(x_{i_0}) \subseteq U$ and $x_{i_0+2} \in U$. 

Since $x_{i_0+1} \in S(x_{i_0})$, then $x_{i_0+2} \in S(x_{i_0+1}) \subseteq S(S(x_{i_0}))$, i.e., $S(S(x_{i_0})) \cap U \neq \emptyset$ and $S(U) \cap U \neq \emptyset$. It follows $\gph S \cap (U \times U) \neq \emptyset$. The last is a contradiction with the definition of $\gamma$.  
\end{proof}
Note that Theorem \ref{generalization} holds in an arbitrary topological space but does not ensure a strict fixed point.

\section{Applications in locally convex vector spaces}

The notations in this section follow that introduced in Section 5 in \cite{MR} with a few additional notations introduced as needed. Let $X$ be a vector space and $\mathbf{S}$ a family of seminorms defined on $X$ that separates points, that is, for each $x \in X \setminus \{0\}$, there exists $s \in \mathbf{S}$ such that $s(x) > 0$.  Let $\tau$ be the topology on $X$ generated by $\mathbf{S}$ which makes $(X, \tau, \mathbf{S})$ a Hausdorff locally convex vector space. Let $\mathbf{E} \subseteq \mathbf{S}$ be a finite set of seminorms on $X$. For each $x \in X$ and each $\varepsilon>0$ denote by
\[V(x;\mathbf{E},\varepsilon):=\{z \in X: s(x-z)<\varepsilon, \text{for all} s \in \mathbf{E}\}.\]
Let $\mathbf{Q}$ be a saturated family of seminorms constructed from $\mathbf{S}$. For each finite set of seminorms $\mathbf{E} \subseteq \mathbf{S}$ define 
\[q_{\mathbf{E}}(x):=\max_{s \in \mathbf{E}} s(x)\] 
and include $q_{\mathbf{E}} \in \mathbf{Q}$. The basic neighbourhoods $V(x;\mathbf{E},\varepsilon)$ can be equivalently written as 
\[V(x;\mathbf{E},\varepsilon):=\{z \in X: q_{\mathbf{E}}(x-z)<\varepsilon\}.\]
\begin{theo} \label{cover} (Theorem 5.1 in \cite{MR})
Let $(X, \tau, \mathbf{S})$ be a Hausdorff locally convex topological vector space. Let $Y  \subseteq X$ be a compact 
set and $\gamma$ be an open cover of $Y$. Then there exist a finite set $\mathbf{E} \subseteq \mathbf{S}$ and
$\varepsilon>0$ such that for each $x \in Y$ there exists $U \in \gamma$ such that $V (x; \mathbf{E},\varepsilon) \subseteq U$.
\end{theo}

\begin{cor} \label{fixed_point_weak}
Let $(X, \tau, \mathbf{S})$ be a Hausdorff locally convex topological vector space. Let $Y \subseteq X$ be a compact set and $S : Y \rightrightarrows Y$ be a set-valued map. If $S$ is lower semicontinuous on an $S$-orbit $M:=\{x_i\}_{i=1}^{\infty}$ such
that 
\[\lim_{i \rightarrow \infty} s(x_i-y) = 0, \text{for all} s \in \mathbf{S} \text{and for all} y \in S(x_i),\]
then there exists a point $\bar{x}$ such that $S(\bar{x}) \subseteq \{\bar{x}\}$.
\end{cor}
\begin{proof} Suppose there is no point $\bar{x} \notin \dom S$. The each $S$-orbit is infinite. 

We follow the proof of Theorem 5.2 in \cite{MR}. Let $\gamma$ be an open cover of $Y$. Let $\varepsilon > 0$ and the finite set
$\mathbf{E} \subseteq \mathbf{S}$ be chosen for $\gamma$ accordingly to Theorem \ref{cover}. For every
$x \in Y$ the set $V (x;\mathbf{E}, \varepsilon)$ is a subset of some set $U$ from the cover $\gamma$. Let $i \in \N$ be
such that  
\[ s(x_i-y) < \varepsilon, \text{for all} s \in \mathbf{E} \text{and for all} y \in S(x_i).\] Then $V(x_i;\mathbf{E},\varepsilon) \subseteq U$ for some $U \in \gamma$.
Hence $S(x_i) \subseteq V(x_i;\mathbf{E},\varepsilon)$ and $\{x_i\} \cup S(x_i) \subseteq U$. Therefore $M$ is f-t-contractive. Since $S$ is also lower semicontinuous on $M$ we can apply Theorem \ref{Hausdorff_Empty} to obtain there exists $\bar{x} \in X$ such that $S(\bar{x}) \subseteq \{\bar{x}\}$.
\end{proof}
\begin{cor} \label{fixed_point_sat}
Let $(X, \tau, \mathbf{S})$ be a Hausdorff locally convex topological vector space and $\mathbf{Q}$ be the corresponding saturated family of seminorms. Let $Y \subseteq X$ be a compact set and $S : Y \rightrightarrows Y$ be a lower semicontinuous set-valued map such that $S(x) \neq \emptyset$ for all $x \in Y$. If for all $S$-orbits $\{x_i\}_{i=1}^{\infty}$ the following series is convergent 
\begin{equation} \label{summable}
\sum_{i=1}^{\infty} q(x_{i+1}-x_i) < \infty, \text{for all} q \in \mathbf{Q}, 
\end{equation}
then there exists a point $\bar{x}$ such that $S(\bar{x}) = \{\bar{x}\}$.
\end{cor}
\begin{proof}

Let $\gamma$ be an open cover of $Y$. Let $\varepsilon > 0$ and the finite set
$\mathbf{E} \subseteq \mathbf{S}$ be chosen for $\gamma$ accordingly to Theorem \ref{cover}. For every
$x \in Y$ the set $V (x;\mathbf{E}, \varepsilon):=\{z \in X: q_{\mathbf{E}}(z-x)<\varepsilon\}$ is a subset of some set $U$ from the cover $\gamma$. Now we will construct an orbit following the steps of Lemma 2.4 in \cite{IKZ}. By our starting assumption $S(x) \neq \emptyset$ for all $x \in X$. Fix $x_1 \in X$. Choose $x_{i+1} \in S(x_i)$ so that \begin{equation}\label{construction}
            q_{\mathbf{E}}(x_{i+1},x_i) > \min\{1, \sup_{y \in S(x_i)}q_{\mathbf{E}}(y-x_i)\}/2.
        \end{equation}
From \ref{summable} it follows $\lim_{i \rightarrow \infty} q_{\mathbf{E}}(x_{i+1},x_i)=0$ and from \ref{construction} we conclude $\sup_{y \in S(x_i)}q_{\mathbf{E}}(y-x_i) \rightarrow 0$ as $i \rightarrow \infty$. 
Moreover, from \ref{summable} and the compactness of $Y$ it follows that each $S$-orbit is a convergent sequence. Then there exists a point $x_L$ and $N \in \N$ such that 
\begin{equation} \label{1}
q_{\mathbf{E}}(x_{i}-x_L) < \varepsilon/2 \text{for all} i \geq N.\end{equation}
Choose $M \in \N$ such that 
\[
\sup_{y \in S(x_{i})}q_{\mathbf{E}}(y-x_{i}) < \varepsilon/2 \text{for all} i \geq M.
\]
Now for $n \geq \max\{M,N\}$
and all $y \in S(x_{n})$ we obtain
\begin{equation} \label{2}
q_{\mathbf{E}}(y-x_L) \leq q_{\mathbf{E}}(y-x_{n})+q_{\mathbf{E}}(x_{n}-x_L)<\varepsilon.
\end{equation}
From \ref{1} and from \ref{2} it follows there exists a point $x_n \in X$ such that
\[\{x_n\} \cup S(x_n) \subseteq V (x;\mathbf{E}, \varepsilon) \subseteq U \in \gamma.\]
Assumptions of Theorem \ref{f-t-contractive_global} are satisfied and since there is no point outside of the domain of $S$ the conclusion of Corollary \ref{fixed_point_sat} follows immediately.
\end{proof}
We proceed with a result that is a starting point for the proof of Ekeland's variational principle, Caristi's fixed point theorem and Takahashi's minimization principle. An alternative approach to proving Ekeland's variational principle in Hausdorff locally convex vector spaces can be found in \cite{Hammel_lc}. Extensions to more general topological settings (e.g., uniform spaces) can be found in \cite{Hammel_un}.

Recall that a function $f: X \to \mathbb{R} \cup \{ + \infty \}$ is lower semicontinuous from above if for every sequence $\{x_i\}_{i=1}^\infty \subset \dom f$ such that $x_i \to x $ and $\{f(x_i)\}_{i=1}^\infty$ is decreasing, i.e. $f(x_i)>f(x_{i+1})$, it holds that $f(x) \leq f(x_i)$ for all $i \in \N$, see \cite{KS}. We denote the cardinality of a set $A$ by $|A|$.
\begin{theo} \label{Ek_loc_conv}
Let $(X,\tau, \mathbf{S})$ be a Hausdorff sequentially complete locally convex topological
 vector space. Let $f: X \rightarrow \R \cup \{+\infty\}$ be an extended-valued proper and sequentially lower semicontinuous from above function, bounded from below. Let $\mathbf{\Lambda}$ be a family of positive numbers, such that $|\mathbf{\Lambda}|=|\mathbf{S}|=|\mathbf{I}|$, where $\mathbf{I}$ is an index set. Consider the set-valued map
\begin{equation} \label{Ekeland_map}
S(x):=\{y \in X: \lambda_k s_k(y-x) \leq f(x)-f(y), \text{for all} k \in \mathbf{I}\}.
\end{equation}
Then there exists $\bar{x}$ such that $S(\bar{x})=\{\bar{x}\}$.
\end{theo}
\begin{proof}
Note that $x \in S(x)$ for all $x \in X$. From the triangle inequality it follows 
\begin{equation} 
\label{subset}
S(S(x)) \subseteq S(x).
\end{equation}
Fix $x_1 \in X$ and consider an arbitrary orbit $\{x_i\}_{i=1}^{\infty}$ starting at $x_1$. Observe that for all $k \in \mathbf{I}$
\begin{equation}
\label{finite_sum}  
\lambda_k \sum_{i=1}^n s_k(x_{i+1}-x_i) \leq f(x_0)-f(x_n) \leq f(x_0)-\inf_X f = \text{const.}
\end{equation}
From \ref{finite_sum} it follows that each orbit is a Cauchy sequence. Indeed, fix an arbitrary neighbourhood $V$ of $\mathbf{0}$. There exists $\varepsilon > 0$ and a finite subset $I \subseteq \mathbf{I}$ such that
\[V:=\{x \in X: s_k(x)<\varepsilon, \text{for all} k \in I\}.\]
Let $\displaystyle \lambda_0:=\min_{k \in I} \lambda_k$. From \ref{finite_sum} it follows for all $k \in I$ there exist $N_k \in \N$ such that for all $m_k > n_k \geq N_k$
$$ \displaystyle
{\lambda_k \sum_{i=n_k}^{m_k-1} s_k(x_{i+1}-x_i) \leq \lambda_0 \varepsilon.}
$$
Let $\displaystyle N_0:=\max_{i=1,\dots,k} N_k$. For all $m>n \geq N_0$ and all $k \in I$ it holds that
$$ 
s_k(x_m-x_n) \leq \sum_{i=n}^{m-1} s_k(x_{i+1}-x_i) \leq \frac{\lambda_0}{\lambda_k} \varepsilon < \varepsilon,
$$
that is, each $S$-orbit $\{x_i\}_{i=1}^{\infty}$ is a Cauchy sequence and from the sequential completeness of the space it converges to a point $x_L$.

Now we will prove that $S$ is lower semicontinuous on each $S$-orbit. According to the Remark \ref{rem} we will have to check it only at the limit point $x_L$. Observe that $\{f(x_{i})\}_{i=1}^\infty$ is decreasing, and since $f$ is supposed to be sequentially lower semicontinuous from above, $f(x_L)\le f(x_{i})$ for all $i\ge 1$. Using \ref{subset}, it suffices to prove $x_L \in S(x_i)$ for all $i \in \N$. For the proof of orbital completeness we follow the steps in Theorem 4, \cite{Hammel_un}. Fix $i$ and choose $n >i$, for all $k \in \mathbf{I}$ we have
\begin{equation} \label{m_n}
s_k(x_n-x_i) \leq \frac{1}{\lambda_k}(f(x_i)-f(x_n)),
\end{equation}
\begin{equation} \label{x_m_n}
s_k(x_L-x_i) \leq s_k(x_L-x_n) + s_k(x_n-x_i).
\end{equation}

Combining \ref{m_n} and \ref{x_m_n} we get
\[s_k(x_L-x_i) \leq s_k(x_L-x_n)+\frac{1}{\lambda_k}(f(x_i)-f(x_n)).\]
Now taking the limit as $n \rightarrow \infty$ we obtain
\[s_k(x_L-x_i) \leq \frac{1}{\lambda_k}(f(x_i)-f(x_L)),\]
and therefore $x_L \in S(x_i)$ for all $i \in \N$.

Now we proceed with a construction an f-t contractive orbit. For all $i \in \N$ set
\[z_{i+1} \in S(z_i): f(z_{i+1}) \leq \inf_{z \in S(z_i)} f(z)+\frac{1}{i}.\]
Since each orbit is a convergent sequence then $z_i \rightarrow z_L \in X$ as $i \rightarrow \infty$. Fix an arbitrary neighbourhood $U$ of $z_L$, that is there exist $\varepsilon>0$ and a finite subset $I \in \mathbf{I}$ such that
\[U:=\{x \in X: s_k(x-z_L)<\varepsilon, \text{for all} k \in I.\}\]  
Let $\displaystyle \lambda:=\min_{k \in I}\lambda_k$ and $i_0$ be such that 
\[
\frac{1}{i_0}<\lambda\frac{\varepsilon}{4}.
\]
We obtain for all $y \in S(z_{i_0})$
\[ \lambda_k s_k(y-z_{i_0}) \leq f(z_{i_0})-f(y) \leq f(z_{i_0})-\inf_{z \in S(z_{i_0})}f(z)<\frac{1}{i_0}<\lambda \frac{\varepsilon}{4}, \text{for all} k \in I,
\]
that is, for all $y \in S(z_{i_0})$
\begin{equation}
\label{set_conv}
s_k(y-z_{i_0})< \frac{\varepsilon}{4}, \text{for all} k  \in I.
\end{equation}
Since $z_i \rightarrow z_L$ as $i \rightarrow \infty$ we can always choose $i_1 \geq i_0$ such that 
\begin{equation}
\label{z_i_z_L}
s_k(z_{i_1}-z_L)< \frac{\varepsilon}{2}, \text{for all} k \in I.
\end{equation}
Therefore $z_{i_1} \in U$. Moreover, for all $y \in S(z_{i_1}) \subseteq S(z_{i_0})$ from \ref{set_conv} and \ref{z_i_z_L} we get
\[s_k(y-z_L)<s_k(y-z_{i_0})+s_k(z_{i_0}-z_i)+s_k(z_i-z_L)<\varepsilon \text{for all} k \in I.\]
For an arbitrary neighbourhood $U$ of $z_L$ we can find $i_1 \in \N$ such that
\[\{z_{i_1}\} \cup S(z_{i_1}) \in U,\]
therefore $\{z_i\}_{i=1}^{\infty}$ is an f-t-contractive $S$ orbit on which $S$ is lower semicontinuous. Put $\bar{x}=z_L$ and apply Theorem \ref{Hausdorff_Empty} to obtain $S(\bar{x})=\bar{x}$. 
\end{proof}


\section{Topological contractions in first countable Hausdorff spaces}

Throughout this section, we work within the framework of first-countable spaces, where topological properties can be fully characterized by sequences. 

\begin{defi} \label{strong_acc_point}
Let $(X, \tau)$ be a first countable topological space. Let $S:X \rightrightarrows X$ be a set-valued map. An infinite $S$-orbit $\{x_i\}_{i = 1}^{\infty}$ is said to have a strong accumulation point $\bar{x} \in X$ if any neighborhood of $U_{\bar{x}}$ of $\bar{x}$ is such that $\{x_i\} \cup S(x_i) \subseteq U_{\bar{x}}$ for infinitely many $i$'s, i.e., there is a subsequence $\{x_{i_k}\}_{k= 1}^{\infty}$ such that
 \[x_{i_k} \rightarrow \bar{x}\] 
 and whenever $a_k \in S(x_{i_k})$
 \[a_k \rightarrow \bar{x} \text{as} k \rightarrow \infty,\]
 as well. 
\end{defi}
Lemma \ref{f-t-orbit} and Theorem \ref{Hausdorff_Empty} can be restated as follows.
\begin{lemma} \label{conv_o_orbital}
 Let $(X, \tau)$ be a first countable Hausdorff space. Let $S: X \rightrightarrows X$ be a set-valued map and the $S$-orbit $M:=\{x_i\}_{i=1}^{\infty}$ is an infinte f-t-contractive $S$-orbit. Then there exists a strong accumulation point of $M$.
\end{lemma} 

\begin{proof}
We follow the proof of Lemma 1 in \cite{Kupka}. There are two cases.

\smallskip

Case 1. There exists $\bar{x}$ such that for all $U_{\bar{x}}$ the set 
\[I_{U_{\bar{x}}}:=\{i \in \N: \{x_i\} \cup S(x_i) \subseteq U_{\bar{x}}\}\]
is infinite. Then, $\bar{x}$ is a strong accumulation point.

\smallskip

Case 2. For all $x \in X$ there exists $U_{x}$ such that
\[I_{U_{x}}:=\{i \in \N: \{x_i\} \cup S(x_i) \subseteq U_{x}\}\]
is finite or empty, i.e., for all $x \in X$ either

\smallskip

2.1. $I_{U_{x}}=\emptyset$, i.e., for all $i \in \N$ $x_i \notin U_{x}$ or $S(x_i) \setminus U_x \neq \emptyset$;

2.2. $I_{U_{x}} \neq \emptyset$. 

If there exists $x_n \in M$ such that $S(x_n)=\{x_n\}$ then $S(x_i)=\{x_i\}=\{x_n\} \subseteq U_{x_n}$ for all $i \geq n$, and the result follows.

Otherwise, for all $i \in \N$ there exists $a_i \in S(x_i)$, $a_i \neq x_i$. We will construct an open cover with no element containing both $x_i$ and $S(x_i)$ for some $i \in \N$. For every pair $(x_i, a_i)$, $i \in I_{U_x}$ we will remove either $x_i$ or $a_i$ from $U_x$. For all $x \in X$ define the set

\[V_x:=\left \{\begin{array}{rl} \emptyset, & \text{if } I_{U_x}=\emptyset;  \\
\{a_j\}\cup\{x_i\}_{i \in I_{U_x}, i \neq j}, & \text{if } x = x_j \text{for some} j \in I_{U_x}; \\
\{x_i\}_{i \in I_{U_x}}, & \text{if } x \neq x_i \text{for each} i \in I_{U_x}. \\
\end{array}\right.\]

Since $I_{U_x}$ is finite or empty for all $x \in X$ then $V_x$ is finite or empty for all $x \in X$. Therefore $V_x$ is a closed set and $U_x \setminus V_x$ is an open neighborhood of $x$. Consider the open cover of $X$ $\gamma:=\{Z_x:=U_x \setminus V_x: x \in X\}$. There is no element $Z_x$ of $\gamma$ such that $x_i \in Z_x$ and $S(x_i) \subseteq Z_x$. Therefore every $\tau$-contractive $S$-orbit has a convergent subsequence $\{x_{i_k}\}_k$, $x_{i_k} \rightarrow \bar{x}$, as $k \rightarrow \infty$. Moreover, for every neighborhood $U_{\bar{x}}$ of $\bar{x}$ and for all but finitely many $k \in \N$ it holds that 
$$ 
\{x_{i_k}\} \cup S(x_{i_k}) \subseteq U_{\bar{x}}.
$$
\end{proof}

\begin{theo} \label{Hausdorff_Empty_1} 
Let $(X, \tau)$ be a first countable Hausdorff space. Let $S: X \rightrightarrows X$ be a set-valued map. Suppose $M:=\{x_i\}_{i=1}^{\infty}$ is an infinite f-t-contractive $S$-orbit and $S$ is lower semicontinuous on $M$. Then each strong accumulation point $\bar{x}$ of $M$ is such that $S(\bar{x}) \subseteq \{\bar{x}\}$. 
\end{theo}
\begin{proof} Follows the proof of Theorem \ref{Hausdorff_Empty} and is omitted.
\end{proof}
We now demonstrate that, unlike in Theorem \ref{generalization}, the Hausdorff assumption in Theorem \ref{Hausdorff_Empty_1} is essential and cannot be dropped. 
\begin{lemma} \label{seq_constr} Let $(X, \tau)$ be a first countable $T_1$ space. If $X$ is not Hausdorff, then there exists a sequence $M:=\{x_i\}_{i=1}^{\infty} \subseteq X$ such that $x_i \neq x_{j}$ for all $i \neq j$ and  $\bar{x} \neq \bar{y}$, $x_i \neq \bar{x}$ and $x_i \neq \bar{y}$ for all $i,j \in \N$ with $\{x_i\}_{i=1}^{\infty}$ convergent to both $\bar{x}$ and $\bar{y}$.   
\end{lemma}
\begin{proof}
Since $X$ is not Hausdorff there exist two points $\bar{x} \neq\bar{y}$ such that $U_{\bar{x}} \cap U_{\bar{y}} \neq \emptyset$ for any pair of neighborhoods $U_{\bar{x}}$, $U_{\bar{y}}$ of $\bar{x}$ and $\bar{y}$, respectively. 

Since $X$ is a $T_1$ space, there exist neighborhoods $\tilde{U}_{\bar{x}}$ of $\bar{x}$ and $\tilde{U}_{\bar{y}}$ of $\bar{y}$ such that $\bar{y} \notin \tilde{U}(\bar{x})$, and $\bar{x} \notin \tilde{U}(\bar{y})$. Denote by $\{L_n\}_{n = 1}^{\infty}$, and $\{W_n\}_{n = 1}^{\infty}$ a nested countable local base at the points $\bar{x}$ and $\bar{y}$, respectively, with $L_1 \subseteq \tilde{U}_{\bar{x}}$,  $W_1 \subseteq \tilde{U}_{\bar{y}}$,  and $L_{i+1} \subseteq L_{i}$, $W_{i+1} \subseteq W_{i}$ for every $i=1,2,3,\dots$. 

Construct a sequence $M:=\{x_i\}_{i = 1}^{\infty}$ in the following way $x_1 \in L_1 \cap V_1$. For $i = 2, 3, \dots$, $x_i \in L_{k_i} \cap V_{k_i}$, where $k_i:=\max\{n \in \N: x_{i-1} \notin L_n \cap V_n\}$. Note that $k_i$ is a finite number, otherwise we get a contradiction with the separation axiom $T_1$. The sequence constructed in this way is such that $x_i \neq x_{j}$, for all $i,j \in \N$, $i \neq j$, $\bar{x} \neq \bar{y}$ and $x_i$ converges to both $\bar{x}$ and $\bar{y}$ as $i \rightarrow \infty$. 
\end{proof}

\begin{prop} 
Let $(X, \tau)$ be a first countable $T_1$ space. If $X$ is not a Hausdorff space, then there exists a set-valued map $S$ and an $S$-orbit $M:=\{x_i\}_{i=1}^{\infty}$ with a strong accumulation point $\bar{x}$, such that $S$ is lower semicontinuous on $M$ at $\bar{x}$ and $S(\bar{x}) \nsubseteq \{\bar{x}\}$.
\end{prop}
\begin{proof} Let $M:=\{x_i\}_{i = 1}^{\infty}$ be the sequence constructed in Lemma \ref{seq_constr}. Let $\bar{x}$ and $\bar{y}$ be the limit points of $M_0$ as defined in Lemma \ref{seq_constr}. Consider the following set-valued map:

\begin{equation}
S(x):= \left \{\begin{array}{rl} M, & \text{if } x \notin M \cup \{\bar{x}\} \cup \{\bar{y}\};  \\
\{x_{i+1}\}, & \text{if } x = x_{i} \text{for some} x_{i}  \in M; \\
\{\bar{y}\}, & \text{if } x=\bar{x};\\
\{\bar{x}\}, & \text{if } x=\bar{y}.\end{array}\right.
\end{equation}

The infinite $S$-orbit $\{x_i\}_{i = 1}^{\infty}$ is f-t-contractive and moreover, $S$ is lower semicontinuous on $M$ at $\bar{x}$ and $\bar{y}$. For its strong accumulation points $\bar{x}$ and $\bar{y}$ it holds that $S(\bar{x})=\{\bar{y}\} \nsubseteq \{\bar{x}\}$ and $S(\bar{y})=\{\bar{x}\}\nsubseteq \{\bar{y}\}$.
\end{proof}

\begin{cor}  Let $(X, \tau)$ be a first countable $T_1$ space. Assume that for any set-valued map $S: X \rightrightarrows X$ and any f-t-contractive $S$-orbit $M=\{x_i\}_{i=1}^{\infty}$, the condition $S(\bar{x}) \subseteq \{\bar{x}\}$ holds for every strong accumulation point $\bar{x}$ at which $S$ is lower semicontinuous on $M$. Then, $X$ is a Hausdorff space. \end{cor}

\section{Contractions in premetric spaces}

Consider a first countable topological space $(X,\tau)$ and a function $p:X\times X\to \R^+$ with the property \[\text{(P1)} \;  p(x,y)=0 \Leftrightarrow  x=y.\] We will call the triple $(X, \tau,p)$ a premetric space. Note that in \cite{Mas} the author defines the premetric as a two-variable non-negative function $p$ that satisfies the weaker assumption $p(x,x)=0$ for any $x \in X$. Consider the following additional properties
    
    (P2) if $p(x,x_n)\to 0$, then $x_n\to x$;
    
    (P3) if $x_n\to x$, then $p(x_{n+1},x_{n})\to 0$.

    (P4) $p(x, \cdot)$ is continuous for every fixed $x \in X$.
    
In $\cite{IKZ}$ the authors define the premetric as a two-variable non-negative function $p$ that satisfies (P1) and (P4). Spaces $(X, \tau, p)$ with function $p$ satisfying properties (P2) and (P3) and the additional assumption $(X, \tau)$ is Hausdorff are also considered in \cite{IKZ}. We always assume property (P1). For the sake of clarity and simplicity we will denote by 
\[p:X\times X\to \R^+ \text{a function that satisfies at least property (P1),}\]
\[h:X\times X\to \R^+ \text{a function that satisfies at least properties (P1), (P2) and (P3),}\]
and by 
\[g:X\times X\to \R^+ \text{a function that satisfies at least properties (P1) and (P4).}\]
For a sequence $\{x_i\}_{i=1}^{\infty} $ in a premetric space $(X,\tau,p)$, the sum $\ds  \sum_{i=1}^{\infty} p(x_{i+1},x_i)$ can be considered as the $p$-length of the sequence. If a sequence $\{x_i\}_{i=1}^{\infty}$ is infinite and has a finite $p$-length, it will be called a $\Sigma_p$-Cauchy sequence. A premetric space is called  $\Sigma_p$-semicomplete if every $\Sigma_p$-Cauchy sequence in $X$ has a convergent subsequence and $\Sigma_p$-complete if every $\Sigma_p$-Cauchy sequence in $X$ is convergent, see \cite{IKZ} and \cite{Suzuki-2018}.

\begin{prop}
    \label{prop:h-def-top}
    Let $(X,\tau,h)$ be a premetric space. Then
    
    \emph{(P2')} $x_i\to x \iff h(x,x_i)\to 0$;
    
   \emph{(P3')} if $x_i\to x$, then $h(x_{i+1},x_i)\to 0$ and  $h(x_{i},x_{i+1})\to 0$; 
\end{prop}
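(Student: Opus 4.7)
The plan is to establish both \emph{(ii')} and \emph{(iii')} by invoking axiom \emph{(iii)} of Definition~\ref{def:p-space} on cleverly interleaved auxiliary sequences that still converge to $x$. The underlying idea is that, although $p$ is asymmetric and carries no triangle inequality, axiom \emph{(iii)} applies to \emph{every} sequence convergent to $x$, so by choosing the interleaving we can force the ``wrong-order'' distances to appear as consecutive-pair distances in some convergent sequence.

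For \emph{(ii')}, one implication $p(x,x_n)\to 0 \Rightarrow x_n\to x$ is precisely axiom \emph{(ii)}. For the converse, assume $x_n\to x$ and define $y_{2k-1}=x$, $y_{2k}=x_k$. Both subsequences (the constant one and $\{x_k\}$) converge to $x$, hence $y_n\to x$. By axiom \emph{(iii)} one has $p(y_{n+1},y_n)\to 0$; reading the even indices $n=2k$ yields $p(y_{2k+1},y_{2k})=p(x,x_k)\to 0$, which is the desired implication. (As a byproduct, the odd indices $n=2k-1$ give $p(x_k,x)\to 0$.)

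For \emph{(iii')}, the first assertion $p(x_{n+1},x_n)\to 0$ is axiom \emph{(iii)} applied directly to $\{x_n\}$. For the second assertion $p(x_n,x_{n+1})\to 0$, I would use a shifted interleaving: let $z_{2k-1}=x_{k+1}$ and $z_{2k}=x_k$. Both subsequences converge to $x$, so $z_n\to x$, and axiom \emph{(iii)} gives $p(z_{n+1},z_n)\to 0$. Looking at odd indices $n=2k-1$ one obtains $p(z_{2k},z_{2k-1})=p(x_k,x_{k+1})\to 0$, completing the proof.

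I do not expect a real obstacle here; the only nuance is recognizing that the single axiom \emph{(iii)} can be iterated to give all four one-sided limits ($p(x,x_n),\; p(x_n,x),\; p(x_{n+1},x_n),\; p(x_n,x_{n+1})$) precisely because interleaving preserves convergence to $x$ while permuting which distance appears between consecutive terms.
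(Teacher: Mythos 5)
Your proof is correct and takes essentially the same route as the paper: both verify (ii') by applying axiom \emph{(iii)} to the interleaved sequence $x,x_1,x,x_2,\ldots$ (the paper phrases it as a contradiction, you argue directly) and (iii') by applying it to a reordered copy of $\{x_n\}$ converging to $x$. If anything, your shifted interleaving $x_2,x_1,x_3,x_2,x_4,x_3,\ldots$ is marginally tighter than the paper's pair-swap $x_2,x_1,x_4,x_3,\ldots$, since the consecutive distances of your sequence exhibit \emph{every} term $p(x_k,x_{k+1})$, whereas the paper's sequence directly displays only the pairs $p(x_{2k-1},x_{2k})$.
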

\begin{proof}
  The proof of (P2') follows the lines of the proof of Proposition 2.6 in \cite{IKZ} and will be skipped here. 

To prove (P3') we need to prove only that $x_i\to x$ implies $h(x_{i},x_{i+1})\to 0$. Consider the sequence
 $$
        x_2,x_1,x_4,x_3,\ldots,x_{n+1},x_n,\ldots.
    $$
  It also converges to $x$ and by (P3) it follows $h(x_{i},x_{i+1})\to 0$.
\end{proof}

\begin{prop}
\label{$T_1$_space}
 If $(X,\tau,h)$ be a premetric space, then $(X,\tau)$  is a $T_1$ space.
\end{prop}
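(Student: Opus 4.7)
The plan is a short proof by contradiction. Suppose $(X,\tau)$ fails to be $T_1$. By the exact negation of the $T_1$ separation axiom, there exist two distinct points $x,y\in X$ such that every open neighborhood of $x$ contains $y$. I would aim to show that this forces $p(x,y)=0$, contradicting Definition~\ref{def:p-space}(i).

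To extract this, I would test the premetric against the constant sequence $y_n:=y$ for all $n\in\N$. Since every neighborhood of $x$ contains $y=y_n$, the sequence $(y_n)$ converges to $x$ in $\tau$ by definition of sequential convergence. I would then invoke Proposition~\ref{prop:h-def-top}(ii') in the direction ``$x_n\to x \Rightarrow p(x,x_n)\to 0$'' to conclude $p(x,y_n)\to 0$. But the left-hand side is the constant value $p(x,y)$, so necessarily $p(x,y)=0$, and Definition~\ref{def:p-space}(i) then gives $x=y$, contradicting $x\ne y$.

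The only subtle point, and the one I would flag as the main obstacle, is verifying that the invoked direction of Proposition~\ref{prop:h-def-top}(ii') is legitimate in the present setting, since the proposition is applied here to prove $T_1$ and so must not secretly rely on any separation axiom. Inspecting its proof, the implication ``$x_n\to x\Rightarrow p(x,x_n)\to 0$'' is established by applying Definition~\ref{def:p-space}(iii) to the interleaved sequence $x,x_1,x,x_2,\dots$, and this argument is valid in an arbitrary first countable space. Should one wish to avoid citing Proposition~\ref{prop:h-def-top} entirely, the same conclusion $p(x,y)=0$ follows by applying Definition~\ref{def:p-space}(iii) directly to the alternating sequence $y,x,y,x,\dots$, which also converges to $x$ under the non-$T_1$ hypothesis and yields $p(x,y)\to 0$ along its consecutive pairs.
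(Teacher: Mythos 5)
Your proposal is correct and essentially matches the paper's own argument: the paper likewise negates $T_1$ to get distinct points $z,y$ with $y$ in every neighborhood of $z$, feeds the alternating sequence $y,z,y,z,\dots$ (which converges to $z$) into Definition~\ref{def:p-space}(iii)/(iii') to get $p(z,y)=0$, and concludes $z=y$ via (i) --- precisely the fallback route you describe at the end. Your primary route via the constant sequence and Proposition~\ref{prop:h-def-top}(ii') is only a cosmetic repackaging of the same interleaving trick, and you correctly verify that (ii') involves no hidden separation axiom, so there is no circularity.
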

\begin{proof}
For any $x \in X$, denote by $\{U_i(x)\}_{i=1}^{\infty}$ a countable nested local base of $x$. Suppose that $(X,\tau)$ is not a $T_1$ space, i.e., there are points $z,y \in X$, $z \neq y$ such that $y \in U_i(z)$ for each $i\in\mathbb{N}$. Consider the alternating sequence
\[
y,z,y,z\dots,y,z,\dots
\]
which converges to $z$. From (P3') we have $h(z,y)=0$. From (P1) it follows that $z=y$, a contradiction.
\end{proof}

Here we give an example of a premetric space $(X, \tau, h)$ which is not Hausdorff.
\begin{examp} \label{premetric_T1} Consider the line with two origins $(X, \tau)$, see \cite{Munkres}. The space X is the union of the set $\R \setminus \{0\}$ and the two-point set $\{a\}\cup\{b\}$, $a \neq b$. The space $(X, \tau)$ is first countable and the local base at each point is given by all open intervals in $\R$ that do not contain 0, along with all the sets of the form $(-x,0) \cup \{a\} \cup (0,x)$ and $(-x,0) \cup \{b\} \cup (0,x)$, for $x \in \R$, $x > 0$. This space is $T_1$, but not Hausdorff. We can define a function $h$ as follows $h(x,y)=h(y,x)=|x-y|$, if $x,y \in \R \setminus \{0\}$, $h(x,y)=p(y,x)=|x|$, if $x \in \R \setminus \{0\}$ and $y \in \{a\} \cup \{b\}$ and $h(a,b)=h(b,a)=1$. 
\end{examp}
For a set $C\subseteq X$, $C \neq \emptyset$, we will denote by $p_{C}(x):=\sup_{y\in C} p(y,x)$.

\begin{defi} 
Let $(X, \tau, p)$ be a premetric space. Let $S:X \rightrightarrows X$ be a set-valued map. An infinite $S$-orbit $\{x_i\}_{i=1}^{\infty}$ is said to be p-contractive if there is a convergent subsequence $\{x_{i_k}\}_{k=1}^{\infty}$ and $p_{S(x_{i_k})}(x_{i_k}) \rightarrow 0$ as $k \rightarrow \infty$.
\end{defi}

 \begin{examp} \label{Examp_Ek}   Let $(X,\tau,p)$ be a $\Sigma_p$-semicomplete premetric space. Let $f: X \rightarrow \R \cup \{+\infty\}$ be a proper, bounded below function. Consider the map defined as
$$S(x):=\{y \in X: p(y,x) < f(x)-f(y)\};$$
if each orbit is infinite then, there exists a p-contractive $S$-orbit.
Note that each infinite $S$-orbit has a convergent subsequence. Indeed,
\begin{equation}
\label{finite_sum_pr}  
\displaystyle{\Sigma_{i=1}^n p(x_{i+1},x_i) \leq f(x_1)-f(x_n) \leq f(x_1)-\inf_X f = \text{const.}}
\end{equation}
By $\Sigma_p$-semicompleteness of the space it follows that each $\{x_i\}_{i=1}^{\infty}$ has a convergent subsequence.
Now we will construct an orbit that is p-contractive following the steps of Lemma 2.4 in \cite{IKZ}. By our starting assumption $S(x) \neq \emptyset$ for all $x \in X$. Fix $x_1 \in X$. Choose $x_{i+1} \in S(x_i)$ so that \begin{equation}\label{eq:d}
            p(x_{i+1},x_i) > \min\{1, p_{S(x_i)}(x_i)\}/2.
        \end{equation}

From \ref{finite_sum_pr} it follows $\lim_{i \rightarrow \infty} p(x_{i+1},x_i)=0$ and from \ref{eq:d} we conclude $p_{S(x_i)}(x_i) \rightarrow 0$ as $i \rightarrow \infty$.
\end{examp}
Because the contraction is not necessarily compatible with the underlying topology in general, we require a property stronger than lower semicontinuity. In what follows, we study the behavior of orbits of set-valued maps under the assumption of property $(\bar{\star})$, defined below. 
\begin{defi}
    \label{def:bar_star_2}
 Let $(X, \tau)$ be a first countable space and $S:X\rightrightarrows X$ be a set-valued map. We say that an infinite $S$-orbit $\{x_i\}_{i = 1}^{\infty}$ satisfies the property $(\bar{\star})$ if for each subsequence $\{x_{i_k}\}_{k=1}^\infty$ converging to $x$, and each $y\in S(x)$, $y \neq x$, it holds that
    $$
        y \in S(x_{i_{k}}),\quad\forall k\in\mathbb{N}.
    $$
    \end{defi}
 Definition \ref{def:bar_star_2} is inspired by Definition 2.9 in \cite{IKZ}. Let us note that in Definition 2.9 in \cite{IKZ} the condition is imposed on any orbit that starts at a given point and for $S$ satisfying $x \notin S(x)$ for all $x$.

\begin{prop} \label{t=>p}  Let $(X, \tau, h)$ be a Hausdorff premetric space. Let $S:X \rightrightarrows X$ be a set-valued map. Let $M:=\{x_i\}_{i=1}^{\infty}$ be an infinite f-t-contractive $S$-orbit. Then $M$ is a h-contractive $S$-orbit.
\end{prop}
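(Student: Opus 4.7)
The plan is to combine Lemma~\ref{conv_o_orbital} with Proposition~\ref{prop:h-def-top}(iii'). Since $(X,\tau)$ is first countable Hausdorff and $M$ is $\tau$-contractive, Lemma~\ref{conv_o_orbital} yields a strong accumulation point $\bar{x}\in X$ of $M$: a subsequence $\{x_{i_k}\}_{k=1}^{\infty}$ with $x_{i_k}\to \bar{x}$ such that every choice $a_k\in S(x_{i_k})$ satisfies $a_k\to \bar{x}$. Since $M$ is an infinite orbit, $x_{i_k+1}\in S(x_{i_k})$, so $S(x_{i_k})\neq\emptyset$ for each $k$, and $p_{S(x_{i_k})}(x_{i_k})$ is well defined. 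I would then take this particular subsequence and aim to show $p_{S(x_{i_k})}(x_{i_k})\to 0$ along it, which is exactly what p-contractivity requires.

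The verification proceeds by contradiction. Suppose $p_{S(x_{i_k})}(x_{i_k})\not\to 0$. Then, after passing to a further subsequence (which I relabel), there exist $\varepsilon_0>0$ and witnesses $y_k\in S(x_{i_k})$ with $p(y_k,x_{i_k})\geq \varepsilon_0$ for every $k$. By the strong accumulation property applied to the selection $\{y_k\}$, we have $y_k\to \bar{x}$, while $x_{i_k}\to \bar{x}$ as well. The key step is an interleaving trick: define the sequence $z_{2k-1}:=y_k$ and $z_{2k}:=x_{i_k}$. Since both $\{y_k\}$ and $\{x_{i_k}\}$ converge to $\bar{x}$, so does $\{z_n\}$. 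Proposition~\ref{prop:h-def-top}(iii') then gives $p(z_n,z_{n+1})\to 0$, and in particular
$$
  p(y_k,x_{i_k}) \;=\; p(z_{2k-1},z_{2k})\;\longrightarrow\;0,
$$
contradicting $p(y_k,x_{i_k})\geq \varepsilon_0$. Hence $p_{S(x_{i_k})}(x_{i_k})\to 0$, and $M$ is p-contractive.

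The main subtlety is that $p_C(x)=\sup_{y\in C}p(y,x)$ is a supremum that need not be attained, so a direct appeal to $a_k\to \bar{x}$ for a single selection is not enough; one needs to extract a witnessing selection violating the bound. Once that is done, the interleaving argument replaces the missing triangle inequality and reduces everything to the one-sided convergence property (iii') of the premetric.
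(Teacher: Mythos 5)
Your proof is correct and takes essentially the same route as the paper's: Lemma~\ref{conv_o_orbital} to obtain the strong accumulation point, extraction of a witnessing selection to handle the non-attained supremum $p_{S(x_{i_k})}(x_{i_k})$, and Proposition~\ref{prop:h-def-top}(iii') applied to the interleaved sequence $x_{i_1},a_1,x_{i_2},a_2,\dots$ in place of the missing triangle inequality. The only difference is organizational: the paper argues directly, choosing for each fixed $\varepsilon>0$ near-supremum witnesses $a_k^{\varepsilon}\in S(x_{i_k})$ to conclude $p_{S(x_{i_k})}(x_{i_k})<2\varepsilon$ eventually, whereas you run the same mechanism by contradiction with witnesses violating a fixed bound $\varepsilon_0$ along a subsequence.
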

\begin{proof} Since $\bar{x}$ is a strong accumulation point of $M$ then there exists a subsequence $\{x_{i_k}\}_{k=1}^{\infty}$ and whenever $a_k \in S(x_{i_k})$  $a_k \rightarrow \bar{x}$, i.e., the sequence
\[x_{i_1},a_1,x_{i_2},a_2,\dots,x_{i_k},a_k,\dots\]
converges to $\bar{x}$.
What remains to be proved is $h_{{S(x_{i_k}})}(x_{i_k}) \rightarrow 0$ as $k \rightarrow \infty$.

Fix $\varepsilon>0$. For all $k$ there exists $a_k^{\varepsilon} \in S(x_{i_k})$ such that
\begin{equation} \label{sup_eps}
h_{S(x_{i_k})}(x_{i_k})<h(a_k^{\varepsilon},x_{i_k})+\varepsilon.
\end{equation}

From (P3') it follows that $h(a_k^{\varepsilon}, x_{i_k}) \rightarrow 0$ as $k \rightarrow \infty$. Pick $K \in \N$ such that for all $k \geq K$ it holds that $h(a_k^{\varepsilon}, x_{i_k}) < \varepsilon$. From \ref{sup_eps} it follows that 
\[
h_{S(x_{i_k})}(x_{i_k})<2\varepsilon. 
\]
Since we chose $\varepsilon$ arbitrarily then $h_{S(x_{i_k})}(x_{i_k}) \rightarrow 0$ as $k \rightarrow \infty$. Therefore, $\bar{x}$ is a p-contractive accumulation point of $M$.
\end{proof}
The converse, in general, is not true, see Example \ref{Moore_plane}.

\begin{theo} \label{strong_point}Let $(X, \tau, h)$ be a Hausdorff premetric space. Let $S: X \rightrightarrows X$ be a set-valued map. Suppose $M:=\{x_i\}_{i=1}^{\infty}$ is a h-contractive $S$-orbit that satisfies property  $(\bar{\star})$. Then there exists an accumulation point $\bar{x}$ of $M$ such that $S(\bar{x}) \subseteq \{\bar{x}\}$.
\end{theo}
\begin{proof} 
Assume the contrary, i.e., that there exists a set-valued map $S:X\rightrightarrows X$ with an infinite $S$-orbit $M:=\{x_i\}_{i=1}^{\infty}$ satisfying the property $(\bar{\star})$ with convergent subsequence $\{x_{i_k}\}_k \rightarrow \bar{x}$ satisfying $h_{S(x_i)}(x_i) \rightarrow 0$, and some $y\in S(\bar{x})$, $y\neq \bar{x}$. By property $(\bar{\star})$ of $M$, there exists a subsequence $\{x_{{i_k}_j}\}_{j=1}^{\infty}$ such that $y\in S(x_{{i_k}_j})$ for all $j\in \N$. Because of $y\in S(x_{{i_k}_j})$ and $h_{S(x_{i_k})}(x_{i_k}) \rightarrow 0$ we have that $h(y,x_{{i_k}_j}) \rightarrow 0$. By (ii)  $\{x_{{i_k}_j}\}_j\to y$. But  $\{x_{{i_k}_j}\}_j\to \bar{x}$ and $\bar{x} \neq y$. Since $X$ is a Hausdorff space, this yields a contradiction.
\end{proof}

 Here are two examples that illustrate Theorem \ref{strong_point}. The first example shows that the Hausdorff property is again crucial for Theorem \ref{strong_point}.

\begin{examp} Let $(X ,\tau, h)$ be the premetric defined in example \ref{premetric_T1}. Define a sequence $\{x_i\}^{\infty}_{i=1}$ as follows $x_i=1/i$, for all $i \geq 1$, $i \in \N$. Set $M:=\{x_i\}^{\infty}_{i=1}$ Consider the set-valued map $S:X \rightrightarrows X$ defined as
\[
S(x):= \left \{\begin{array}{rl} M, & \text{if } x \notin M \cup \{a\} \cup \{b\};  \\
\{x_{i+1}\} \cup \{a\}\cup  \{b\}, & \text{if } x = x_{i} \text{for some} x_{i}  \in M; \\
\{b\}, & \text{if } x=a;\\
\{a\}, & \text{if } x=b.\end{array}\right.
\]
Consider the $S$-orbit which consists of the points in $M$. Property $(\bar{\star})$ holds for this orbit. Note that $x_i \rightarrow a$ and $x_i \rightarrow b$ as $i \rightarrow \infty$. From (P2') it follows that
\[h(a,x_i) \rightarrow 0, \; h(b,x_i) \rightarrow 0,\] and from (P3) it follows that \[h(x_{i+1},x_i) \rightarrow 0.\]
Therefore $h_{S(x_{i})}(x_{i}) \rightarrow 0$. But $\{b\}=S(a) \nsubseteq \{a\}$ and $\{a\}=S(b) \nsubseteq \{a\}$.
\end{examp}
 

\begin{examp} \label{Moore_plane}
The Niemytzky's tangent disc topology (also known as Moore plane, see \cite{Counterexamples}, Example 82) is defined on the closed upper-half plane $X=\{(x,y) \in \R^2: y \geq 0\}$. A countable local  base $L_n(x,y)$  at each point $(x,y) \in X$ is given by 
\[L_n(x,y):=\left\{ \begin{array}{ll}
\left\{(t,z) \in R^2: (t-x)^2+(z-y)^2<\frac{1}{n^2}\right\}, & \text{if} \; y>0;\\[10pt]
\{(x,y)\} \cup\left\{(t,z) \in \{R^2: (t-x)^2+(z-\frac{1}{n})^2<\frac{1}{n^2}\right\}, & \text{if} \; y=0.\\
\end{array} \right.\]

 Moore plane $(X,\tau)$ is a first countable Hausdorff space which is not metrizable, but it is a Hausdorff premetric space with the additional properties (P2) and (P3) (see \cite{IKZ}). 
 
 
\smallskip

Consider the map
\[S(x,y):=\left\{\begin{array}{ll}
\{(\frac{x}{2},\frac{x}{2})\}, & \text{if} \; y=0;\\[10pt]
\{(x,0)\}, & \text{if} \; y \neq 0.\\
\end{array} \right.\]
The map $S$ satisfies the assumptions of Theorem \ref{strong_point}. Each infinite $S$-orbit  has a convergent to $(0,0)$ subsequence and $S(0,0)= \{(0,0)\}$.
\end{examp}

\section{Application to fixed point theorems for set-valued maps regular with respect to a premetric}

The set valued map $S$ from Example \ref{Examp_Ek} is a starting point for the proof of Ekeland's variational principle. If the function $f$ is lower semicontinuous from above and the premetric satisfies (P4) it can be shown that each $S$-orbit satisfies property $(\bar{\star})$.  A detailed proof of Ekeland's variational principle in premetric spaces can be found in \cite{KZ}. Note that if we can define a premetric that satisfies property (P4) then the underlying space must be Hausdorff, see \cite{KZ}. 

\begin{theo} \label{th:Ekeland} (Theorem 3.1, \cite{KZ}) 
Let $(X,\tau,g)$ be a  $\Sigma_g$-semicomplete space. Let the function $f: X \rightarrow \R \cup \{+\infty\}$ be proper,  bounded below and lower semicontinuous. Let $\varepsilon>0$ and $\lambda>0$ be arbitrary.   Then
\begin{itemize}
\item[\emph{(j)}] there exists $v \in X$ such that for all $x \in X$
\begin{eqnarray}
f(v) \leq f(x)+\frac{\varepsilon}{\lambda} g(x,v).  \label{eq:Ekeland}
\end{eqnarray}
\end{itemize}
For any $x_0 \in X$  such that $f(x_0) \le \inf_X f + \varepsilon $ and  for the multi-valued map $S :X \rightrightarrows X$ defined as
\[S (x):=\left\{y \in X: f(y)<f(x)-\frac{\varepsilon}{\lambda} g(y,x)\right\}\]
there exists an $S$-orbit starting at $x_0$ and ending at some $v$ with a $g$-length $\ds L(v,x_0):=\sum^{\infty}_{n= 0}g(x_{n+1},x_n)$ satisfying
\begin{itemize}
\item[\emph{(jj)}] $L(v, x_0) \leq \lambda$;
\item[\emph{(jjj)}] $\ds f(v) \leq f(x_0) - \frac{\varepsilon}{\lambda} L(v,x_0)$.
\end{itemize}
\end{theo}




We now extend the fixed-point result of Theorem 7.44 for regular maps in metric spaces in \cite{Io} to the setting of premetric spaces. Extensions of this result in quasi-metric and $(q_1,q_2)$-quasi-metric spaces can be found in \cite{TT1}, \cite{TT2} and \cite{asym}. Further results concerning fixed points in $(q_1, q_2)$-quasi-metric spaces can be found in \cite{IDRZ}. By $B(x,r)$ we denote the "closed ball" of radius $\rho$ and center $x \in X$ in a premetric space $(X, \tau, g)$ defined as follows
$$B(x,\rho):=\{y \in X: g(y,x) \leq \rho\};$$

In the absence of the triangle inequality for the generalized distance $g$, we cannot utilize standard local arguments, consequently, we define the regularity condition globally.
\begin{defi} \label{reg_rate} Let $(X, \tau, g)$ and $(Y, \tau', g')$ be premetric spaces. A  multi-valued map  $S:X \rightrightarrows Y$ is called \emph{regular} with respect to $g$ and $g'$ if there is $r>0$ such that for all $(x,y) \in \gph S$ and all $t>0$,
 \[B(y,rt) \subset S(B(x,t)).\]
The upper bound of all such $r>0$ is called the \emph{regularity rate} of $S$.
\end{defi}
\begin{rem} 
 We make two remarks regarding Definition \ref{reg_rate}.
 
(1) While there are three equivalent notions of regularity in the metric case (see \cite{Io}), this paper does not aim to redefine them for premetric spaces. Instead, we utilize just one of them to establish a fixed-point theorem.

(2) In metric and quasi-metric spaces, Theorem 6.3 is associated with a weaker notion of regularity known as orbital regularity. We do not address it here since it is again outside of the scope of this work.
\end{rem}

\begin{theo} \label{th:Fixed_point_regularity}
Let $(X, \tau, g)$ be a premetric space. Suppose, additionally that $g$ is a lower semicontinuous function in $X \times X$. Let $S:X \rightrightarrows X$ be a set-valued map with closed graph and regular with respect to $g$ with regularity rate $r>1$. Then $S$ has a fixed point.
\end{theo}

\begin{proof}
 Follows part of the proof of Theorem 7.44 in \cite{Io}. Choose an arbitrary point $(\bar{x},\bar{y}) \in \gph  S$. If $\bar{x}=\bar{y}$, we are done. Otherwise, choose $\xi>0$ and $1<r_1<r$ such that $r_1\xi<1$. Consider the following function 
 \[g_\xi((x,y),(u,v))=\max\{g(x,u),\xi g(y,v)\}.\]
 Consider the space $(\gph S, \tau', g_\xi)$ where $\tau'$ is the topology on $\gph S$ induced by the product topology of $X \times X$. We will prove $(\gph S, \tau', g_\xi)$ is a $\Sigma_{g_{\xi}}$-semicomplete premetric space with premetric function satisfying (P4).

 (P1) $g_\xi((x,y),(u,v))=0 \Leftrightarrow  (x,y)=(u,v)$ is trivial;

 (P4) $g_{\xi}((x,y), \cdot)$ for every fixed $(x,y) \in \gph F$ as a maximum of two continuous functions is continuous.

    
    
 What remains to be proved is $\Sigma_{g_{\xi}}$-semicompleteness. Consider a sequence $\{(x_i,y_i)\}^{\infty}_{i=1} \subseteq \gph S$ such that 
\[\sum_{i=1}^{\infty}g_{\xi}((x_{i+1},y_{i+1}),(x_i,y_i)) < \infty.\]
From the definition of $g_\xi$ it follows that 
\[\sum_{i=1}^{\infty}g(x_{i+1},x_{i}) < \infty, \text{ and }\sum_{i=1}^{\infty}g(y_{i+1},y_i) < \infty.\]
From $\Sigma_g$-completeness of the space $X$ it follows $x_i \rightarrow x \in X$ and $y_i \rightarrow y \in X$, as $i \rightarrow \infty$. Since the graph of $S$ is closed then $(x,y) \in \gph S$.

\smallskip


Now, we can apply Ekeland's theorem for the function

(1) $g(x,y)$ on $ \gph S$ with the premetric function $g_\xi$;

(2) $(x_0,y_0)=(\bar{x},\bar{y})$

(3) $\varepsilon=g(\bar{x},\bar{y})$, $\lambda=\frac{g(\bar{x},\bar{y})}{r_1-1}$.

Therefore there exists $(\bar{u},\bar{v}) \in$ $\gph S$ such that \begin{equation} \label{Ek_in_grphF} 
g(x,y)+(r_1-1)g_{\xi}((x,y),(\bar{u},\bar{v})) \geq g(\bar{u},\bar{v}).
\end{equation}

Suppose $\bar{v} \notin S(\bar{u})$, then $g(\bar{u},\bar{v})>0$. Choose $0<r_1<r_2<r$ and set $t=\frac{g(\bar{u},\bar{v})}{r_2}$. From regularity of $S$ it follows $\bar{u} \in B(\bar{v},r_2t) \subset S(B(\bar{u},t))$. Therefore there exists $w \in \gph S$ such that $\bar{u} \in S(w)$ and $
g(w,\bar{u}) \leq t = \frac{g(\bar{u},\bar{v})}{r_2}$, that is 
\[r_2g(w,\bar{u}) \leq g(\bar{u},\bar{v}).\]

In \ref{Ek_in_grphF} put $(x,y)=(w,\bar{u})$.
\[g(w,\bar{u})+(r_1-1)g_{\xi}((w,\bar{u}),(\bar{u},\bar{v})) =\]
\[g(w,\bar{u})+r_1g_{\xi}((w,\bar{u}),(\bar{u},\bar{v}))-\max\{g(w,\bar{u}),\xi g(\bar{u},\bar{v})\} \leq
\]
\[r_1g_{\xi}((w,\bar{u}),(\bar{u},\bar{v})) \leq \max\{\frac{r_1}{r_2}g(\bar{u},\bar{v}),r_1 \xi g(\bar{u},\bar{v})\}<g(\bar{u},\bar{v}).\]

The last is a contradiction with \ref{Ek_in_grphF}.
\end{proof}
\begin{examp} \footnote{This example was created with the assistance of Google Gemini (June 2026 version) and verified by the author.} Let $X=\R_+$ with the usual topology, 
\[g(x,y)=\left\{\begin{array}{ll}|x-y|e^{-\frac{1}{\max\{x,y\}}}, & x \neq y,\\
0, & x=y\end{array}\right.\] and 
\[S(x)=\{x^2+x\}.\] The unique fixed point of $S$ in $X$ is 0. The regularity rate of $S$ with respect to the usual metric at 0 is 1, so we cannot apply Theorem 7.44 from \cite{Io}. But we can apply Theorem \ref{th:Fixed_point_regularity}.

Indeed, note that $g$ is a symmetric premetric and also (P4) holds. Further, we will prove that $X$ is $\Sigma_g$-complete space. Consider a sequence $\{x_i\}_{i=1}^{\infty}$ such that 
\begin{equation} \label{fin_s}
\displaystyle \Sigma_{i=1}^{\infty}g(x_{i+1},x_i)<\infty.\end{equation}
Suppose $\{x_i\}_{i=1}^{\infty}$ is divergent therefore there exists $\varepsilon>0$ such that for all $N \in \N$ there exist $m > n \geq N$
\begin{equation} \label{divergent}
|x_m-x_n| \geq \varepsilon.
\end{equation}
The function $f(t) = e^{-1/t}$ is strictly increasing for $t > 0$. Therefore,
$$\int_a^b e^{-\frac{1}{t}} dt \le (b - a) e^{-\frac{1}{b}}$$
Note that $\lim_{t \to 0^+} e^{-1/t} = 0$, so the integral above is convergent even in case $a=0$. For any two points $x_i$ and $x_{i+1}$, we establish that
$$g(x_{i+1}, x_i)=|x_{i+1}-x_i|e^{-\frac{1}{\max\{x_{i+1},x_i\}}} \ge \left| \int_{x_i}^{x_{i+1}} e^{-\frac{1}{t}} dt \right|,$$
hence
$$\sum_{i=n}^{m-1} g(x_{i+1}, x_i) \geq \sum_{i=n}^{m-1} \left| \int_{x_i}^{x_{i+1}} e^{-\frac{1}{t}} dt \right| \ge \left| \int_{x_n}^{x_m} e^{-\frac{1}{t}} dt \right| = \int_{\min\{x_n, x_m\}}^{\max\{x_n, x_m\}} e^{-\frac{1}{t}} dt.$$
Taking into account \ref{divergent} and the monotonicity of $f$ we obtain
$$\sum_{i=n}^{m-1} g(x_{i+1}, x_i) \ge \int_0^{\varepsilon} e^{-\frac{1}{t}} dt>0.$$

We now derive the regularity rate of $S$ with respect to $g$. For the map $S$ the regularity rate is equivalent to 
\[\displaystyle r = \inf_{\substack{x, w \in X \\ w \neq x}} \frac{g(S(w), S(x))}{g(w, x)}=\inf_{\substack{x, w \in X \\ w \neq x}}\frac{|x^2+x-w^2-w|e^{-\frac{1}{\max\{x^2+x,w^2+w\}}}}{|x-w|e^{-\frac{1}{\max\{x,w\}}}}.\]
Due to the symmetry of $g$ and the monotonicity of $S(x) = \{x^2 + x\}$, we assume without loss of generality that $x \ge w$. It follows $S(x) \ge S(w)$ and therefore,
\[r=\inf_{\substack{x, w \in X \\ w \neq x}} \frac{(x+w+1)e^{-\frac{1}{x^2+x}}}{e^{-\frac{1}{x}}}=e>1.\]
\end{examp}

\section*{Acknowledgements}
The author gratefully acknowledges Nadia Zlateva, Milen Ivanov and Rumen Marinov for the valuable discussions and insightful comments that contributed for the development of this work.

\section*{Funding}
This research was supported by the Bulgarian National Science Fund under Grant No. KP-06-H92/6 (December 8, 2025).


\begin{thebibliography}{99}
\bibitem{Baisnab_Saha} Baisnab, A. P., Roy, K., Saha, M.,
\textit{Cantor's intersection theorem and some generalized fixed point theorems over a locally convex topological vector spaces},
 Methods of Functional Analysis and Topology, 26(3), 262-271, (2020). 

\bibitem{Cob} Cobza\c{s}, S., \textit{Fixed points and completeness in metric and generalized metric spaces}, J. Math. Sci., 250(3): 475--535, (2020).

\bibitem{Engelking} 
Engelking, R., \textit{General topology}, Berlin: Heldermann, (1989).

\bibitem{asym} Eroğlu, I., Beg, I., \textit{Asymmetric regularity in quasi-metric spaces with application to fixed point of set valued maps}, Computational and Applied Mathematics, 45(3):115, (2026).

\bibitem{Hammel_lc}
Hammel, A. H. \textit{Phelps’ lemma, Dane\v{s}’ drop theorem and Ekeland’s principle in locally convex spaces}, Proc. Amer. Math. Soc., 131, 3025-3038, (2003).

\bibitem{Hammel_un}
Hammel, A. H. \textit{Equivalents to Ekeland's variational principle in uniform spaces}, Nonlinear Anal. Theory Methods Appl., 62(5), 913-924, (2005).

\bibitem{IDRZ} Ilchev, A., Marinov, R., Nedelcheva, D., Zlatanov, B., \textit{Coupled Fixed Points in $(q_1, q_2)$-quasi-metric spaces}, 13(20):3242,  Mathematics, (2025).

\bibitem{Io} Ioffe, A. D., \textit{ Variational Analysis of Regular Mappings: Theory and Applications}, Springer Monographs
in Mathematics, Springer (2017).

\bibitem{IKZ} Ivanov, M., Kamburova, D., Zlateva, N., \textit{Long Orbit or Empty Value (LOEV) principle in general}, Optim., to appear.

\bibitem{plr}
Ivanov, M., Konstantinov, M., Zlateva, N., \textit{ Subdifferential determination of a primal lower regular function on a Banach space} J. Conv. Anal., 33(2), to appear.

\bibitem{Ivanov-Zlateva}
Ivanov, M., Zlateva, N.,  \textit{Long orbit or empty value principle, fixed point and surjectivity theorems}, Compt. rend. Acad. bulg. Sci., 69(5): 553--562, (2016).


\bibitem{iv_zla_nme}
Ivanov, M., Zlateva, N., \textit{Surjectivity in Fr\'echet spaces}, J. Optim. Theory Appl., 182(1): 265--284, (2019).




\bibitem{KZ} Kamburova, D., Zlateva, N., \textit{Ekeland's Variational Principle in premetric spaces}, J. Dyn. Games, 14: 64--71 (2026).

\bibitem{KS}
Kirk, W. A., Saliga, L. M., \textit{The Brezis-Browder order principle and extensions of Caristi's theorem},
Nonlinear Anal., 47: 2765--2778, (2001).

\bibitem{Kupka} Kupka, I., \textit{A Banach fixed point theorem for topological spaces}, Revista Colombiana de Matematicas, XXVI, 95--100, (1992).

\bibitem{Kupka_Sl} Kupka, I., \textit{Topological conditions for the existence of fixed points}, Math. Slovaca, 48(3): 315--321, (1998).


\bibitem{Mas}
Maslyuchenko, O. V., \textit{Categorically related topologies and hemimetrical analogues of the Baire and Kenderov theorems}, Results in Mathematics, 79(5):179, (2024).

\bibitem{MR_B} Morayne, M., Rałowski, R., \textit{The Baire theorem, an analogue of the Banach fixed point theorem and attractors in $T_1$ compact spaces}, Bulletin des Sciences Mathematiques, 183: 103231, (2023).

\bibitem{MR}
Morayne, M., Rałowski, R., \textit{Fixed point theorems for topological contractions and the Hutchinson operator}, Bulletin des Sciences Mathematiques, 153(4): 975-1001, (2025).

\bibitem{Munkres} 
Munkres, J., \textit{Topology}, 2nd ed., (2000).

\bibitem{Ned}  Nedev, S.I., \textit{o-metrizable spaces}, Trudy Moskov. Mat. Ob$\check{s}\check{c}$, 24:201–236, (1971).

\bibitem{Niem} Niemytzki, V. W.,
\textit{On the "Third Axiom of Metric Space"}, Transactions of the American Mathematical Society, 29(3):507-513, (1927).

\bibitem{P-Ch} Pitcher A. D., Chittenden E. W., \textit{On the foundations of the calcul fonctionnel of Fréchet}, Transactions of the American Mathematical Society. 19(1): 66-78, (1918).

\bibitem{Counterexamples}
Steen, L. A., Seebach, J. A., Steen, L. A. \textit{Counterexamples in topology}, Vol. 18. New York: Springer, (1978).

\bibitem{Souza_Alves}Souza, J. A., Alves, R. W. M
\text{Cantor–Kuratowski theorem in admissible spaces,}
Topology and its Applications, 252, 158-168,
 (2019).

\bibitem{Suzuki-2018}
Suzuki, T., \textit{  Characterization of $\Sigma$-semicompleteness via Caristi's fixed
point theorem in semimetric spaces}, J. Funct. Spaces,   article ID 9435470, 1-7, (2018).

\bibitem{TT1} Tron, N. H., Théra, M., \textit{Fixed points of regular set-valued mappings in quasi-metric spaces}, Optim.,18, 1-27, (2024). 

\bibitem{TT2} Tron, N. H., Théra, M., \textit{ Coincidence, fixed points in symmetric $(q_1; q_2)$-quasi-metric spaces and sensitivity analysis of generalized equations}, J. Dyn. Games., (2025).


\end{thebibliography}
\end{document}